\newcommand{\abs}[1]{\left |#1\right |}
\newcommand{\paren}[1]{\left(#1\right)}
\newcommand{\bracket}[1]{\left[#1\right]}
\newcommand{\set}[1]{\left\{#1\right\}}
\newcommand{\E}{\mathbb{E}}
\newcommand{\R}{\mathbb{R}}
\newtheorem{thm}{Theorem}
\newtheorem{prop}{Proposition}
\newtheorem{lem}{Lemma}
\newtheorem{asmp}{Assumption}
\newcommand{\dmd}{{\tiny \mathrm{DMD}}}
\newcommand{\mf}{{\tiny \mathrm{mf}}}
\newcommand{\Dkl}{\mathcal{R}}
\newcommand{\bx}{\mathbf{x}}
\newcommand{\bX}{\mathbf{X}}
\newcommand{\bc}{\mathbf{c}}
\newcommand{\ba}{\mathbf{a}}
\newcommand{\bk}{\mathbf{k}}
\newcommand{\bmu}{\boldsymbol{\mu}}
\newcommand{\Vdmd}{V_{\mathrm{\tiny DMD}}}
\newcommand{\Vhdmd}{\hat{V}_{\mathrm{\tiny DMD}}}
\newcommand{\nuAB}{\nu_{\mathrm{\tiny AB}}}
\newcommand{\nuhAB}{\hat{\nu}_{\mathrm{\tiny AB}}}
\newcommand{\VAB}{V_{\mathrm{\tiny AB}}}
\newcommand{\VhAB}{\hat{V}_{\mathrm{\tiny AB}}}
\newcommand{\ZAB}{Z_{\mathrm{\tiny AB}}}
\newcommand{\ZhAB}{\hat{Z}_{\mathrm{\tiny AB}}}
\newcommand{\FAB}{F_{\mathrm{\tiny AB}}}
\newcommand{\FhAB}{\hat{F}_{\mathrm{\tiny AB}}}
\DeclareMathOperator{\Cov}{Cov}
\numberwithin{equation}{section}
\title{A Theoretical Examination of Diffusive Molecular Dynamics}
\author{G. Simpson}
\email{simpson@math.drexel.edu}
\address{Department of Mathematics, Drexel University}
\author{M. Luskin}
\email{luskin@umn.edu}
\address{School of Mathematics, University of Minnesota}
\author{D.J. Srolovitz}
\email{srol@seas.upenn.edu}
\address{Department of Materials Science and Engineering, Department
  of Mechanical Engineering and Applied Mechanics, University of Pennsylvania}
\thanks{GS and ML were supported by US Department of Energy Award
  DE-SC0012733.}
\date{\today}
\begin{document}
\maketitle

\begin{abstract}

  Diffusive molecular dynamics is a novel model for materials with
  atomistic resolution that can reach diffusive time scales. The main ideas of diffusive molecular dynamics are to first
  minimize an approximate variational Gaussian free energy of the
  system with respect to the mean atomic coordinates (averaging over
  many vibrational periods), and to then to perform a diffusive step
  where atoms and vacancies (or two species in a binary alloy) flow on
  a diffusive time scale via  a master equation.  We present a
  mathematical framework for studying this algorithm based upon
  relative entropy, or Kullback-Leibler divergence.  This adds
  flexibility in how the algorithm is implemented and interpreted.  We then compare
  our formulation, relying on relative entropy and absolute continuity
  of measures, to existing formulations.  The main difference amongst
  the equations appears in a model for vacancy diffusion, where
  additional entropic terms appear in our development.
\end{abstract}
\section{Introduction}

One of the outstanding challenges in atomistic simulation of condensed
systems, such as solids, liquids, and glasses, is access to
experimentally meaningful length and time scales.  The spatial scales
amenable to direct molecular dynamics (MD) simulations have grown over
time; MD simulations on current large scale parallel computational
facilities now reach over $10^{12}$ atoms or approximately 1
$\mu$m$^3$, \cite{largestmd}.  Significantly larger length scales can
be achieved by application of multiscale modeling techniques that
combine atomistic simulations with continuum methods (see, for
example,
\cite{AbrahamBroughtonBernsteinEtAl1998,ShilkrotCurtinMiller2002,BlancLeBrisLegoll2005,RuddBroughton2005,WagnerLiu2003,TadmorOrtizPhillips1996,acta.atc}).
With regard to time, MD simulations have fundamental time scales
associated with atomic vibration periods ($\sim10^{-13}$ s), but
typical MD time steps are two orders of magnitude smaller.  The
longest times that have been achieved in large scale MD simulations on
special purpose hardware is $\sim10^{-3}$ s, \cite{longestmd}.  More
typically MD simulations access times of less than than $10^{-8}$ s.
Hence, reaching laboratory time scales remains amongst the most
important challenges in the application of MD today.

Many approaches have been developed to address the molecular dynamics
time-scale challenge.  Of these, methods based upon transition state
theory have been widely applied and their continued development
remains an active area of research.  Transition state theory-based
methods rely on rare event ideas in which the system explores a
particular energy basin for a long period of time and makes infrequent
transitions from one basin to another,
\cite{Voter:1997p12731,Voter:1998p13729,Henkelman:1999is,LeBris:2012et,Simpson:2013cs}.
Access to long time scales is provided by replacing the true dynamics
within the basin with a stochastic approximation and the prediction of
the times between the transitions to other basins.  Such methods rely
on the transitions being sufficiently rare; i.e., the computational
time required to characterize the dynamics within the basin is much
shorter than the time between inter-basin transits.  The main
difficulty with this approach arises when the transitions between
basins are not rare; for example, at high temperature or in situations
where there are low energy barriers between basins.  Another challenge
arises in very large systems as the time between rare events occurring
somewhere in the system typically scales inversely with system size.
Nonetheless, progress has recently been made to extend these methods
to large systems, ~\cite{hyperqc,Kim:2013ee,Xu:2008jk}.

An important distinction between classes of ``events'' in many
materials science applications is between those that are displacive
and those that are diffusive.  Displacive events are often collective
and involve relative atomic displacements that are small compared with
a typical interatomic separation ($\sim2 \textrm{\AA}$).  Diffusive
events, on the other hand, often involve a series of atom or vacancy
hops amongst atomic sites.  In a solid, the time between hops is
commonly many orders of magnitude larger than the atomic vibration
period.  This type of scale separation is necessary for the
application of traditional transition state theory-based approaches.
The motion of defects in materials and many types of phase
transformations occur through a combination of displacive and
diffusive events.

\subsection{Diffusive Molecular Dynamics}

One method that explicitly takes advantage of this separation in scale
between diffusive and displacive events in atomistic simulations is
the so-called Diffusive Molecular Dynamics (DMD) method
\cite{Li:2011gn,Sarkar:2012ce}.  The idea is to first minimize an
approximate free energy (including atomic bonding and atomic
vibrational degrees of freedom) of the system with respect to the mean
atomic coordinates (averaging over many vibrational periods), and then
to perform a diffusive step where atoms and vacancies flow on a
diffusive time scale.  This free energy is typically described using
the variational Gaussian (VG) method
\cite{LeSar:1989wz,Najafabadi:1990uc}.  The time step for such
simulations is the relatively long (compared with vibrational)
diffusional time scale.  In this approach, the search for transition
state barriers is replaced by the introduction of an empirical
diffusion coefficient or mobility.  While this coefficient may be
determined directly from atomistic simulation for every local
transition, it is commonly viewed as a constant across the entire
simulation; this constant can also be determined from purely atomistic
calculations.  This approach is physically sensible when most of the
important diffusive events are of the same type, as in an exchange of
an atom for a vacancy.

A key element of DMD is that, in contrast to traditional MD, each
atomic site has associated with it a continuous probability of
occupancy by an atom. See, for example,
\cite{Najafabadi_Wang_Srolovitz_LeSar_1991}.  We denote this
probability $c_i$ at atomic site $i$.  In the case of a single species
(elemental) materials, $c_i$ close to zero corresponds to site $i$
being a vacancy with high probability, while $c_i$ close to one
corresponds to site $i$ being occupied by an atom with high
probability.  Alloys can also be described in this manner.  For
example, $c_i$ close to zero could correspond to species A while $c_i$
close to one could correspond to species B.  This can be extended to
multiple species, along with vacancies, by the introductions of
additional degrees of freedom.  Such an extension of the state space,
to allow for longer time scale evolution, has also been explored in
\cite{Venturini:2014fv}.

\subsection{Relative Entropy}

One of the tools we make use of in this analysis is relative entropy,
or the Kullback-Leibler divergence (KL), \cite{dupuis1997weak}.  KL is
one of the many ways that the distance between two probability
measures, such as ensembles, can be computed.  It is broadly used in
information theory, uncertainty quantification, statistical inverse
problems, molecular dynamics, and other applications; see, for
instance,
\cite{Katsoulakis:2013cq,Chaimovich:2010ic,Shell:2008cj,Majda:2002wt,Pinski:2013vq,Pinski:2014ul}
and references therein.

Given two probability measures, $\nu$ and $\mu$ on a common state
space, the relative entropy distance between them is given by
\begin{equation}
  \label{e:Dkl1}
  \Dkl({\mu}||\nu) = \begin{cases} \E^{{\mu}}\bracket{\log \frac{d{\mu}}{d\nu}}&
    {\mu} \ll \nu,\\
    \infty & \text{otherwise.}
  \end{cases}
\end{equation}
Here, $\mu\ll \nu$ if for any measurable set $A$ such that $\nu(A)=0$,
we have $\mu(A)=0$ too.  In this case, $\mu$ is said to be absolutely
continuous with respect to $\nu$.

Relative entropy is a natural tool for this work as it can be directly
connected to the Helmholtz free energy of a system.  Indeed, the
statement that $\Dkl\geq 0$ is equivalent to the Gibbs-Bogoliubov
inequality~\cite{Shell:2008cj}.  To motivate this, consider the
canonical ensembles associated with potential $V(x)$ and an
approximate potential $\hat{V}(x)$:
\begin{equation}
  \nu(dx) = Z^{-1} e^{-\beta V(x)}dx, \quad \hat\nu(dx) = \hat{Z}^{-1}
  E^{-\beta \hat{V}(x)}dx,
\end{equation}
where $Z$ is the partition function, $\beta=(k_{\rm B}T)^{-1}$,
$k_{\rm B}$ is the Boltzmann constant and $T$ is the absolute
temperature.  Assuming that $\hat{\nu}\ll \nu$,
\begin{equation}
  \Dkl(\hat{\nu}||\nu) = \beta\E^{\hat{\nu}}[V(x) - \hat{V}(x)] - \log
  \hat{Z} + \log Z.
\end{equation}
Dividing through by $\beta$, and using $\Dkl \geq 0$,
\begin{equation}
  \label{e:gb_inequality}
  -\beta^{-1} \log Z \leq \beta\E^{\hat{\nu}}[\Delta V] - \beta^{-1}\log \hat{Z},
\end{equation}
which is the Gibbs-Bogoliubov inequality.  Mathematical treatments of
$\Dkl$ and its relationship to other popular measures of distance
between probability measures can be found in \cite{dupuis1997weak,
  Gibbs:2002fk}.

\subsection{Outline}

In this work, we formulate DMD using relative entropy, towards the
goal of constructing a rigorous mathematical foundation for the
algorithm.  We also examine under what assumptions the necessary
absolute continuity will hold.  This framework leads to naturally well
posed variational problems that are inherently bounded from below by
virtue of $\Dkl$ being non-negative.

Our paper is outlined as follows.  In Section \ref{s:kl_dmd} we give
our formulation of DMD using relative entropy.  Then, in Section
\ref{s:discussion}, we compare our expressions with those of the
existing formulations of DMD.  Several additional explicit
calculations are given in the Appendix.

\section{A Relative Entropy Formulation of DMD}
\label{s:kl_dmd}

To begin our description of DMD, we work in an extended state space
that includes both the positions of the atom sites, $\bx_i\in \R^3$, and
the presence/absence of atoms, $a_i \in \set{0,1}$.  For binary
alloys, $a_i=1$ could correspond to species A and $a_i = 0$ could
correspond to species B.  In either case, the total number of sites
$N$ is fixed and $i=1,\ldots N$.  In what follows, we give separate
developments of the vacancy and the binary alloy problems.

\subsection{DMD Potentials}
We assume the existence of a potential $V$, that describes atomic
bonding, as in traditional MD.  Here, we focus on the class of pair
potentials,
\begin{equation}
  \label{e:pair}
  V(\bx)  = \sum_{i<j} \phi (\abs{\bx_i - \bx_j}).
\end{equation}
The associated DMD potential is
\begin{equation}
  \label{e:Vdmd}
  \Vdmd(\bx,\ba)  = \sum_{i<j} a_i a_j \phi (\abs{\bx_i - \bx_j}).
\end{equation}
In an elemental material, the potentials $V$ and $\Vdmd$ are non-zero
only when both atom $i$ and atom $j$ are present (i.e., the
interaction between an atom and a vacancy is zero).  In the case of a
binary alloy (ignoring vacancies), we would have
\begin{equation}
  \label{e:Vdmd_binary}
  \begin{split}
    V_{AB}(\bx,\ba) & = \sum_{i<j} a_i a_j \phi_{AA} (\abs{\bx_i - \bx_j}) \\
    &\quad +\sum_{i<j} [ a_i
    (1-a_j) + (1-a_i)a_j ]\phi_{AB} (\abs{\bx_i - \bx_j})\\
    &\quad + \sum_{i<j} (1-a_i)(1-a_j) \phi_{BB} (\abs{\bx_i -
      \bx_j}),
  \end{split}
\end{equation}
where $\phi_{T_iT_j}$ is the pair potential between a pair of atoms of
types $T_i$ and $T_j$ $\in \set{A,B}$. More sophisticated potentials,
such as the embedded atom method (EAM), can be used, but for the
purpose of this work, it will suffice to consider the pair potential.

\subsection{Free Energy and Relative Entropy for the Vacancy Problem}

\subsubsection{Canonical Ensemble}
Letting $D$ be a {bounded, open} subset of $\R^3$, we now formulate
the canonical ensemble for a ``$\bc NT$'' (fixed total composition,
number of atomic sites and temperature) system, by introducing the
distribution
\begin{equation}
  \label{e:nu}
  \nu(d\bx, \ba) = Z^{-1} \exp\set{-\beta (\Vdmd - \bmu \cdot \ba)}d\bx,
\end{equation}
with $\bmu = (\mu_1, \ldots, \mu_N)$ is a set of chemical potentials.
Chemical potential $\mu_i$ is associated with site $i$; each site has
its own reservoir.  While this is in the form of a generalized grand canonical
ensemble, we view the chemical potentials as Lagrange multipliers
enforcing the constraints
\begin{equation}
  \label{e:c_constraint}
  \E^{\nu}[a_i] = c_i,\quad i = 1, \ldots N.
\end{equation}
It is in this way that our $\nu$ is akin to a canonical ensemble, with
the mean occupancy values, the $\bc$, fixed.

% We assume that for the
% potential $V$ such $\mu_i$ exist.

The partition function is
\begin{equation}
  \label{e:partition}
  Z = \sum_{\ba} \int_D \exp\set{-\beta (\Vdmd(\bx,\ba) - \bmu \cdot \ba)}d\bx,
\end{equation}
and the ensemble averages require integration in space over $D$ along
with summation over all possible configurations of
$\ba = (a_1, \ldots, a_N)$.  We thus assume:
\begin{asmp}
  \label{a:Zmu}
  For potential $V$, bounded set $D$, and occupancies $\bc$ with
  $c_i \in (0,1)$, the chemical potentials $\mu_i$ are finite and the
  partition function is finite and positive.
\end{asmp}

We describe the distribution \eqref{e:nu} as the ``true'' DMD
distribution.  The associated free energy is
\begin{equation}
  \label{e:DMDfree}
  F = -\beta^{-1} \log Z + \bmu \cdot \bc.
\end{equation}
In DMD, the concentrations are allowed to evolve under
\begin{equation}
  \label{e:DMDdynamics}
  \dot c_i = \sum_{j\in N(i)} k_{ij} \paren{\frac{\partial F}{\partial
      c_j} - \frac{\partial F}{\partial
      c_i}}
\end{equation}
where $k_{ij}$ is a mobility term, and $N(i)$ includes neighbors of
site $i$; both are, {\it a priori}, unconstrained. We note that
\eqref{e:DMDdynamics} is not the evolution studied by
Sarkar~\cite{Sarkar:2011wu}, due to the challenge of
computing the partition function; rather, they employ an approximate
free energy.  Nevertheless, we contend that, were computational
complexity not an obstacle, \eqref{e:DMDdynamics} is the desired form
of dynamics.

As the $c_i$ evolve, the equilibrium positions of the atomic sites
will also change, and can be obtained by computing
\begin{equation}
  \label{e:meanX}
  \E^{\nu}[\bx_i].
\end{equation}

\subsubsection{Approximate Potential}
Due to the computational challenge in evaluating the partition
function, \eqref{e:partition}, an approximate potential is introduced,
$\hat{V}$, with a corresponding probability distribution, $\hat{\nu}$.
This potential is then tuned to provide a ``best'' approximation of
$\nu$ by $\hat{\nu}$.  Here, we consider the following quadratic
(harmonic) potential in the case of an elemental material (including
vacancies)
\begin{equation}
  \label{e:Vdmdapprox}
  \Vhdmd(\bx,\ba; \bk, \bX)  = \sum_{i} \frac{a_i k_i}{2} \abs{\bx_i -  \bX_i}^2.
\end{equation}
$\bX$ and $\bk$ are parameters approximating the mean atomic position
and its characteristic fluctuation.  The approximate probability
distribution is now
\begin{equation}
  \label{e:nu_approx}
  \hat{\nu}(d\bx, \ba; \bk, \bX) =\hat{Z}^{-1} \exp\set{-\beta(\Vhdmd  - \hat{\bmu}\cdot \ba)}d\bx.
\end{equation}
Again, the $\hat{\mu}_i$ are Lagrange multipliers which must satisfy
the analog of \eqref{e:c_constraint},
\begin{equation}
  \label{e:c_constraint_approx}
  \E^{\hat{\nu}}[a_i] = c_i .
\end{equation}
For this case, we can provide expressions for both the partition
function and the $\hat{\mu}_i$.  The approximate partition function is
\begin{equation}
  \label{e:partition_approx}
  \hat{Z} = \Pi_{i=1}^N \paren{|D|+ e^{\beta\hat\mu_i} Z_i}, \quad Z_i  \equiv \int_D\exp\set{-\tfrac{\beta
      k_i}{2}\abs{\bx_i - \bX_i}^2}d\bx_i,
\end{equation}
where $|D|$ is the volume of the computational domain.
The chemical potentials $\hat{\mu}_i$, defined by
\eqref{e:c_constraint_approx}, satisfy (see \eqref{chempot})
\begin{equation}
  \begin{split}
    \E^{\hat\nu}[a_i]=\frac{e^{\beta\hat\mu_i} Z_i}{|D| +
      e^{\beta\hat\mu_i} Z_i }=c_i.
  \end{split}
\end{equation}
These can then be solved for each $i$ to obtain:
\begin{equation}
  \label{e:mu_approx_vals}
  \hat{\mu}_i =\beta^{-1} \log \paren{\frac{c_i}{1-c_i}\cdot \frac{|D|}{Z_i}}.
\end{equation}
%Thus, we find
%\begin{equation}\label{chempot}
 %   \E^{\hat\nu}[a_i] =\frac{e^{\beta\hat\mu_i} Z_i}{|D| + e^{\beta\hat\mu_i} Z_i }
%\end{equation}
The expression for the chemical potentials, \eqref{e:mu_approx_vals},
can also be used to write a simplified expression for the partition
function:
\begin{equation}
\label{e:partition_approx2}
\hat{Z} = |D|^N\Pi_{i=1}^N \frac{1}{1-c_i}.
\end{equation}

The approximate free energy is now given by
\begin{equation}
  \label{e:free_approx}
  \begin{split}
    -\beta^{-1} \log \hat Z + \hat\bmu\cdot \bc = \beta^{-1} &\sum_{i=1}^N c_i \log c_i +(1-c_i)\log(1-c_i) \\
    &\quad + (c_i-1) \log|D| -c_i \log Z_i.
  \end{split}
\end{equation}
{\bf Note that because $D$ is a {bounded} set, $\bX_i$ is not exactly
  the mean position of atomic site $i$.}  See Appendix \ref{s:details}
for details.

\subsubsection{Relative Entropy and the Generalized Variational Approach}
The basis for optimizing \eqref{e:Vdmdapprox} so that $\hat{\nu}$
provides the best match to $\nu$ is inspired by the Gibbs-Bogoliubov
inequality and the variational Gaussian approach
\cite{LeSar:1989wz,Najafabadi:1990uc}.  As alluded to in the
introduction, this is closely related to the relative entropy metric.

Let us assume that $V$, the primitive potential in this problem, is
bounded on the set $D$.\footnote{While this formally excludes
  potentials which diverge near the origin, such as the Lennard-Jones
  potential, such divergences are routinely avoided by choosing a
  cut-off at an appropriately small interatomic separation.}  In
particular, we assume
\begin{asmp}
  \label{a:Vbound}
  There exists a constant $C>0$ such that for all $\bx \in D^N$ and for
  all $\ba$,
  \begin{equation}
    \label{e:Vbound}
    |\Vdmd(\bx,\ba)|\leq C.
  \end{equation}
\end{asmp}

Under these assumptions, we have the necessary absolute continuity to
proceed with using $\Dkl$:
\begin{prop}
  \label{p:ac1}
  Subject to Assumptions \ref{a:Zmu} and \ref{a:Vbound},
  $\hat\nu\ll \nu$ with Radon-Nikodym derivative
  \begin{equation}
    \label{e:RND}
    \frac{d\hat{\nu}}{d\nu}(\bx, \ba) =\frac{Z}{\hat{Z}} \exp\set{\beta(\Vdmd - \Vhdmd+ \beta (\hat{\bmu} - \bmu)\cdot \ba}
  \end{equation}
  and
  \begin{equation}
    \label{e:Dkl2}
    \begin{split}
      \Dkl(\hat{\nu}||\nu) &= \beta \E^{\hat{\nu}}[\Delta V]+ \beta
      (\hat{\bmu} - \bmu)\cdot \E^{\hat{\nu}}[\ba] + \log Z - \log
      \hat{Z} < \infty.
    \end{split}
  \end{equation}
\end{prop}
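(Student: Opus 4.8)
The plan is to exhibit a single finite measure that dominates both $\nu$ and $\hat\nu$, to verify that $\nu$ has a strictly positive density with respect to it, and then to read off both the Radon--Nikodym derivative \eqref{e:RND} and the formula \eqref{e:Dkl2} for $\Dkl(\hat\nu\|\nu)$ by elementary manipulations of exponentials; the only genuine work is keeping track of the finiteness of the terms that appear.

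First I would fix the reference measure $m$ on the hybrid state space $D^N\times\set{0,1}^N$, namely the product of Lebesgue measure $d\bx$ on $D^N$ with the counting measure on $\set{0,1}^N$. Since $D$ is bounded, $|D|<\infty$, so $m$ is a finite measure, and by construction both $\nu$ and $\hat\nu$ are absolutely continuous with respect to $m$, with densities $\frac{d\nu}{dm}(\bx,\ba)=Z^{-1}\exp\set{-\beta(\Vdmd(\bx,\ba)-\bmu\cdot\ba)}$ and $\frac{d\hat\nu}{dm}(\bx,\ba)=\hat Z^{-1}\exp\set{-\beta(\Vhdmd(\bx,\ba;\bk,\bX)-\hat\bmu\cdot\ba)}$. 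The crux is that $\frac{d\nu}{dm}$ is bounded below by a strictly positive constant: by Assumption \ref{a:Vbound} we have $\abs{\Vdmd}\le C$ on $D^N\times\set{0,1}^N$, by Assumption \ref{a:Zmu} the $\mu_i$ are finite and $0<Z<\infty$, and $\abs{\bmu\cdot\ba}\le\sum_i\abs{\mu_i}$ because $\ba\in\set{0,1}^N$; hence $\frac{d\nu}{dm}\ge Z^{-1}\exp\set{-\beta(C+\sum_i\abs{\mu_i})}>0$ everywhere. Consequently $\nu$ and $m$ are mutually absolutely continuous, so $\nu(A)=0$ iff $m(A)=0$; combining this with $\hat\nu\ll m$ yields $\hat\nu\ll\nu$, and the chain rule for Radon--Nikodym derivatives gives $\frac{d\hat\nu}{d\nu}=\bigl(\frac{d\hat\nu}{dm}\bigr)\big/\bigl(\frac{d\nu}{dm}\bigr)$, which is precisely \eqref{e:RND} after collecting the exponential factors (here $\Delta V=\Vdmd-\Vhdmd$).

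Next, $\Dkl(\hat\nu\|\nu)=\E^{\hat\nu}\bracket{\log\frac{d\hat\nu}{d\nu}}$; taking the logarithm of \eqref{e:RND}, which gives $\log\frac{Z}{\hat Z}+\beta(\Vdmd-\Vhdmd)+\beta(\hat\bmu-\bmu)\cdot\ba$, and using linearity of the expectation together with the constraint $\E^{\hat\nu}[\ba]=\bc$ from \eqref{e:c_constraint_approx} produces \eqref{e:Dkl2}. Finiteness is then checked term by term: $\log Z$ is finite by Assumption \ref{a:Zmu}; $\log\hat Z$ is finite because \eqref{e:partition_approx2} gives $\hat Z=|D|^N\prod_i(1-c_i)^{-1}\in(0,\infty)$ for $c_i\in(0,1)$; the chemical-potential term is finite since $\hat\mu_i$ is finite by \eqref{e:mu_approx_vals}, $\mu_i$ is finite by Assumption \ref{a:Zmu}, and $\E^{\hat\nu}[\ba]=\bc$ is finite; and $\beta\E^{\hat\nu}[\Delta V]$ is finite because $\abs{\Vdmd}\le C$ and $\Vhdmd=\sum_i\tfrac{a_ik_i}{2}\abs{\bx_i-\bX_i}^2$ is bounded on $D^N\times\set{0,1}^N$ ($D$ is bounded and $\bX_i,k_i$ are fixed finite parameters). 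In fact these observations show $\log\frac{d\hat\nu}{d\nu}$ is a bounded function, so the integral defining $\Dkl$ is absolutely convergent and the above manipulations carry no $\infty-\infty$ ambiguity.

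There is no deep obstacle here: the main care needed is in setting up the measure theory cleanly on the mixed continuous/discrete state space and in making the roles of the hypotheses transparent. Boundedness of $D$ is what makes the reference measure $m$ finite and $\Vhdmd$ bounded; boundedness of $V$ in Assumption \ref{a:Vbound} is exactly what keeps $\frac{d\nu}{dm}$ from vanishing, so that absolute continuity cannot be destroyed on a hard-core-type exclusion region; and Assumption \ref{a:Zmu} supplies the finiteness of $Z$ and of the $\mu_i$. Nothing beyond this — no compactness of $D$, no regularity of $V$ — is required.
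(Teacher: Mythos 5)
Your proposal is correct, and it rests on the same two facts that drive the paper's proof: the density of $\nu$ with respect to the natural base measure (Lebesgue on $D^N$ times counting on $\set{0,1}^N$) is bounded \emph{below} by a positive constant, thanks to Assumption \ref{a:Vbound} and the finiteness of the $\mu_i$, while the density of $\hat\nu$ is bounded above. The difference is in the packaging. The paper argues set-by-set: it takes a product set $A=\Pi_i(a_i\times B_i)$, shows $\nu(A)=0$ forces some $B_i$ to be Lebesgue-null, deduces $\hat\nu(A)=0$, and then asserts the conclusion for all measurable sets. That last extension from product sets to arbitrary measurable sets is not automatic (absolute continuity is not, in general, verifiable on a generating class alone) and is left implicit. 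Your route through a common finite reference measure $m$, with $\frac{d\nu}{dm}$ bounded away from zero so that $m\ll\nu$ and hence $\hat\nu\ll m\ll\nu$, sidesteps this entirely and delivers \eqref{e:RND} by the chain rule for Radon--Nikodym derivatives rather than by assertion. You are also more explicit than the paper about why \eqref{e:Dkl2} is finite: observing that $\log\frac{d\hat\nu}{d\nu}$ is a \emph{bounded} function (since $\Vdmd$, $\Vhdmd$, and the chemical potential terms are all bounded on the bounded domain) both justifies the term-by-term evaluation and removes any $\infty-\infty$ concern; the paper simply says ``by our assumptions, $\Dkl$ will be finite.'' In short: same underlying mechanism, but your version is the cleaner and more complete write-up.
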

\begin{proof}
  Let $A$ be any measurable subset of the state space,
  $(\set{0,1} \times {D})^N$, of the form
  \begin{equation}
\label{e:simple_set}
    A = \Pi_{i=1}^N (a_i \times B_i)
  \end{equation}
  where each $B_i$ is a Lebesgue measurable subset of $D\subset\R^d$.
  If $\nu(A)=0$, then
  \begin{equation*}
    0 = \int_{B_1\times B_2\ldots \times B_N}
    \exp\set{-\beta(\Vdmd(\bx,\ba) -
      \bmu \cdot \ba)}d\bx\geq e^{-\beta C - \beta \sum_i \abs{\mu_i}}\Pi_i |B_i|,
  \end{equation*}
  where $|B_i|$ is the Lebesgue measure of the set $B_i$ and $C$ is
  the constant from Assumption \ref{a:Vbound}.  Since the $\mu_i$ are
  assumed to be finite, we can thus infer that the only way $\nu(A)=0$
  is if at least one of the $B_i$ has Lebesgue measure zero,
  regardless of the particular site occupancy values, $a_i$.  Then,
  since the $k_i$ defining $\Vhdmd$ are finite,
  \begin{equation}
    \begin{split}
      \hat\nu(A)& = \hat{Z}^{-1}\int_{B_1\times B_2\ldots \times B_N}
      \exp\set{-\beta(\Vhdmd(\bx,\ba) -
        \hat\bmu \cdot \ba)}d\bx\\
      &\leq \hat{Z}^{-1} e^{\beta \sum_i \abs{\hat\mu_i}} \Pi_i
      |B_i|=0.
    \end{split}
  \end{equation}
  Since this holds for all sets of type \eqref{e:simple_set}, we
  conclude it that for all measurable subsets of  $(\set{0,1}
  \times D)^N$ for which $\nu(A) =0$ we must have $\hat\nu(A) = 0$.
  This gives us absolute continuity of the measures.

  The Radon-Nikodym derivative is then given by \eqref{e:RND} and
  $\Dkl$ is given by \eqref{e:Dkl2}.  By our assumptions, $\Dkl$ will
  be finite.
\end{proof}

Since $\Dkl\geq 0$, we can use \eqref{e:c_constraint_approx} and
\eqref {e:free_approx} to express \eqref{e:Dkl2} as
\begin{equation}
  \begin{split}
    F \leq E^{\hat{\nu}}[\Delta V] + \beta^{-1} &\sum_{i=1}^N c_i
    \log
    c_i +  (1-c_i)\log(1-c_i)-c_i \log Z_i \\
    &\quad +(c_i-1)\log|D|.
  \end{split}
\end{equation}
We thus define the approximate DMD free energy, which is an upper
bound on the true DMD free energy, as
\begin{equation}
  \label{e:free_approx2}
  \begin{split}
    \hat{F} \equiv E^{\hat{\nu}}[\Delta V] +\beta^{-1}&\sum_{i=1}^N c_i
    \log c_i + (1-c_i)\log(1-c_i) -c_i \log Z_i\\
    &\quad +(c_i-1)\log|D|.
  \end{split}
\end{equation}
The DMD algorithm for the vacancy problem proceeds in two steps:
\begin{enumerate}
\item Find a minimizer of $\hat{F}$ over $\bk$ and $\bX$, with
  $\bX_i \in D$ and $k_i \geq 0$.
\item Approximate the dynamics of \eqref{e:DMDdynamics}, substituting
  $\hat{F}$ for $F$.
\end{enumerate}
Thus, since the Gibbs-Bogliubov inequality is equivalent to the
statement that the relative entropy is non-negative, the first step in
the above algorithm is to find the best approximation, with respect to
relative entropy, of $\nu$ over a class of distributions of type
$\hat{\nu}$.

\subsection{Free Energy and Relative Entropy for a Binary Mixture}

Mirroring our examination of the vacancy problem, we consider the
analogous formulation for a binary alloy.

\subsubsection{Canonical Ensemble}
For a binary mixture, many of the calculations are similar, or even
simpler, than for the case of an elemental material with vacancies.
First, we formulate the $\nuAB$ distribution:
\begin{equation}
  \label{e:nuAB}
  \nuAB(d\bx, \ba) = \ZAB^{-1} \exp\set{-\beta(\VAB - \bmu\cdot \ba)}d\bx.
\end{equation}
We continue to assume Assumption \ref{a:Zmu} holds, adapted to the
binary mixture case.

\subsubsection{Approximate Ensemble}
Now, instead of the approximate potential given by
\eqref{e:Vdmdapprox} for the vacancy problem, we assume
\begin{equation}
  \label{e:VhAB}
  \VhAB = \sum_i \frac{k_i}{2}\abs{\bx_i - \bX_i}^2.
\end{equation}
The distinction here is that because each site always contains an atom
of species A or B, the potential is always non-zero.  In contrast, the
potential associated with a vacancy is always zero and a vacancy is
never subject to a force.  Proceeding with the potential
\eqref{e:VhAB}, we find
\begin{equation}
  \nuhAB(d\bx, \ba; \bk, \bX) = \ZhAB^{-1} \exp\set{-\beta (\VhAB -
    \hat\bmu\cdot \ba ) }d\bx,
\end{equation}
where the chemical potentials are chosen to satisfy
$\E^{\nuhAB}[a_i]=c_i$.  In this case
\begin{equation}
  \label{e:Zh_AB}
  \ZhAB = \Pi_{i=1}^N \paren{1 + e^{\beta \hat\mu_i}} Z_i,
\end{equation}
where $Z_i$ defined as in \eqref{e:partition_approx}.  Hence, we can
immediately write the chemical potential as
\begin{equation}
  \label{e:muAB}
  \hat\mu_i = \beta^{-1} \log \paren{\frac{c_i}{1-c_i}}.
\end{equation}
The free energy is then given by
\begin{equation}
  \label{e:free_AB_approx}
  -\beta^{-1} \log \hat Z + \bmu \cdot \bc = \beta^{-1} \sum_{i=1}^N c_i
  \log c_i + (1-c_i) \log(1-c_i) - \log Z_i .
\end{equation}

\subsubsection{Relative Entropy for the Binary Mixture}
We next consider the relative entropy minimization problem for the
binary mixture, requiring $\nuhAB \ll \nuAB$.  Using the same approach
as in the vacancy case, we will also assume the boundedness of $\VAB$,
in the same spirit as Assumption \ref{a:Vbound}:
\begin{prop}
  Under Assumptions \ref{a:Zmu} and \ref{a:Vbound} for the binary
  mixture case, $\nuhAB \ll \nuAB$ with the associated Radon-Nikodym
  derivative, and
  \begin{equation}
    \label{e:DklAB}
    \Dkl(\nuhAB||\nuAB) = \beta \E^{\nuhAB}[\Delta V] + \beta (\hat\bmu- \bmu)
    \cdot \bc - \log \ZhAB + \log \ZAB<\infty.
  \end{equation}
\end{prop}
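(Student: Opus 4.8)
The plan is to follow the proof of Proposition~\ref{p:ac1} almost verbatim, exploiting one structural simplification: the reference Gaussian potential $\VhAB$ of \eqref{e:VhAB} does not involve the occupancy variables $\ba$ at all, and --- being a finite sum of quadratics evaluated on the bounded set $D$ --- it is bounded on $D^N$, so no separate analysis of ``vacancy'' configurations is required.

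\textbf{Step 1: absolute continuity.} As in Proposition~\ref{p:ac1}, it suffices to test product sets $A = \Pi_{i=1}^N (a_i \times B_i)$ with each $B_i$ Lebesgue measurable in $D$, since these generate the product $\sigma$-algebra on $(\set{0,1}\times D)^N$. If $\nuAB(A)=0$, then using the (adapted) Assumption~\ref{a:Vbound} to bound $\VAB$ by a constant $C$ on $D^N$ and the finiteness of the $\mu_i$ from Assumption~\ref{a:Zmu},
\[
0 = \ZAB\,\nuAB(A) = \int_{B_1\times\cdots\times B_N} \exp\set{-\beta(\VAB(\bx,\ba)-\bmu\cdot\ba)}\,d\bx \geq e^{-\beta C -\beta\sum_i\abs{\mu_i}}\,\Pi_i\abs{B_i},
\]
forcing $\abs{B_i}=0$ for at least one $i$, irrespective of $\ba$. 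Since $\VhAB$ is bounded on $D^N$ and the $\hat\mu_i$ are finite --- indeed given explicitly by \eqref{e:muAB} with $c_i\in(0,1)$ --- the same product structure yields
\[
\nuhAB(A) \leq \ZhAB^{-1}\,e^{\beta\sup_{D^N}\abs{\VhAB}+\beta\sum_i\abs{\hat\mu_i}}\,\Pi_i\abs{B_i}=0.
\]
Hence $\nuAB(A)=0$ implies $\nuhAB(A)=0$ for every measurable $A$, i.e. $\nuhAB\ll\nuAB$.

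\textbf{Step 2: Radon--Nikodym derivative and $\Dkl$.} Reading off the densities from \eqref{e:nuAB} and its approximate counterpart, the Radon--Nikodym derivative is
\[
\frac{d\nuhAB}{d\nuAB}(\bx,\ba) = \frac{\ZAB}{\ZhAB}\exp\set{\beta(\VAB-\VhAB)+\beta(\hat\bmu-\bmu)\cdot\ba}.
\]
Taking $\log$ and then $\E^{\nuhAB}$, and using $\E^{\nuhAB}[a_i]=c_i$ (the defining property of the $\hat\mu_i$) together with the shorthand $\Delta V=\VAB-\VhAB$, gives exactly \eqref{e:DklAB}. For finiteness: $\E^{\nuhAB}[\abs{\Delta V}]\leq C+\sup_{D^N}\abs{\VhAB}<\infty$ because both potentials are bounded on $D^N$; $\log\ZAB$ is finite by Assumption~\ref{a:Zmu}; $\log\ZhAB$ is finite by the explicit formula \eqref{e:Zh_AB} with $c_i\in(0,1)$; and $(\hat\bmu-\bmu)\cdot\bc$ is a finite constant. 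Thus $\Dkl(\nuhAB||\nuAB)<\infty$.

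The only point needing any genuine checking beyond transcribing Proposition~\ref{p:ac1} is that the absolute-continuity chain still closes --- that the lower bound on $\nuAB(A)$ forces some $\abs{B_i}=0$ and that this annihilates $\nuhAB(A)$ --- and this is in fact \emph{easier} here than in the vacancy case, precisely because $\VhAB$ is bounded on all of $D^N$ with no dependence on $\ba$. I therefore anticipate no real obstacle; the binary mixture is strictly simpler than the vacancy problem already handled.
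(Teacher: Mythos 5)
Your proposal is correct and is essentially the argument the paper intends: the paper states this proposition without a separate proof, explicitly deferring to "the same approach as in the vacancy case" (Proposition~\ref{p:ac1}), which is exactly what you transcribe, with the correct observation that the $\ba$-independence and boundedness of $\VhAB$ on $D^N$ make the binary case strictly simpler. No gaps.
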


As before, we can reformulate this as a free energy statement,
\begin{equation}
  \begin{split}
    &\underbrace{-\beta^{-1}\log \ZAB + \bmu \cdot \bc}_{\FAB} \\
    &\leq\underbrace{\E^{\nuhAB}[\Delta V] + \beta^{-1} \sum_{i=1}^N
      c_i \log c_i + (1-c_i) \log(1-c_i) - \log Z_i}_{\FhAB}.
  \end{split}
\end{equation}
The simulation now proceeds as above, with a minimizer $\Dkl$ over
$\bk$ and $\bX$ while the $c_i$ evolve.

\section{Discussion}
\label{s:discussion}

\subsection{Relation to Existing DMD Formulations}
\subsubsection{Vacancy Problem}
We compare \eqref{e:free_approx2} to the DMD free energy for the
vacancy problem found in \cite{Li:2011gn,Sarkar:2011wu,Sarkar:2012ce}.
Indeed, if one rewrites equation (3) from the original 2011 DMD paper
\cite{Li:2011gn} for the case of the pair potential (in the classical
approximation -- {\it i.e.}, for Planck's constant equals zero), the
expression is:
\begin{tiny}
  \begin{equation}
    \label{e:free_2011}
    \begin{split}
      F_{2011} &= \sum_{i< j} c_i c_j \paren{\tfrac{2\pi}{\beta
          k_i}}^{\frac d 2} \paren{\tfrac{2\pi}{\beta k_j}}^{\frac d
        2}\iint_{\R^{2d}} \phi(|\bx_i - \bx_j|) e^{-\frac{\beta
          k_i}{2}|\bx_i - \bX_i|^2} e^{-\frac{\beta k_i}{2}|\bx_j -
        \bX_j|^2} d\bx_i d\bx_j\\
      & \quad+ \beta^{-1}\sum_{i=1}^N \frac{d}{2}
      c_i\paren{\log\tfrac{\beta k_i}{2\pi} -1} +c_i\log c_i +
      (1-c_i)\log(1-c_i).
    \end{split}
  \end{equation}
\end{tiny}
Some of the differences between \eqref{e:free_2011} and
\eqref{e:free_approx2} can be reconciled by the use of a {\it mean
  field} approximation, by which \eqref{e:Vdmd} is replaced with
\begin{equation}
  \label{e:Vdmd_mf}
  V_{\dmd, \mf} = \sum_{i < j} c_i c_j \phi(|\bx _i - \bx_j|)
\end{equation}
where the $c_i$ take continuous values between $0$ and
  $1.$ This approximation simplifies some of the computations.  For
instance, the $\mu_i$ are now explicit:
\begin{equation}
  \label{e:mu_mf_vals}
  \mu_i = \beta^{-1} \log\paren{ \frac{c_i}{1-c_i}},
\end{equation}
and the numerical estimation of $\E^{\hat \nu}[\Vdmd]$ is simplified.
Notice that \eqref{e:mu_mf_vals} is the same quantity as we obtained
in our examination of the binary mixture, \eqref{e:muAB}.  { This will
  hold generically when the potential for our distribution, whether
  true or approximate, does not explicitly depend on $\ba$.}
Furthermore, in the mean field case, the finiteness of the partition
function, assumed in Assumption \ref{a:Zmu}, is implied by boundedness
of the mean field potential over the set $D$ for all $\bc\in [0,1]^N$;
thus, an assumption like Assumption \ref{a:Vbound} is required.

However, the mean field approximation does not fully account for the
differences.  Part of the discrepancy may be attributed to the choice
of the state space; Sarkar et al. \cite{Sarkar:2012ce,Li:2011gn} use
$(\set{0,1}\times \R^d)^N$.  Formally, as $D \to \R^d$,
\begin{equation}
  Z_i \to \paren{\frac{2\pi}{\beta k_i}}^{\frac d 2}, \quad
  \E^{\hat\nu}[\Vhdmd] \to \beta^{-1}\frac{d}{2}\sum_{j=1}^N c_j,
\end{equation}
and \eqref{e:free_approx2} tends to
\begin{equation}
  \hat{F} \to F_{2011} + \beta^{-1}\sum_{i=1}^N (c_i-1) \log\abs{D},
\end{equation}
where we have not taken $\abs{D}$ to the limit in the last expression.
This dependence on $\abs{D}$ is a finite size correction to the
free energy.  As $D\to \R^d$, this becomes unbounded.  However, this
term does not alter the algorithm since we are only concerned with differences of
free energies rather than absolute magnitudes.  Indeed, during the
first part of the algorithm, where a local minimizer of $  \hat{F}$ is
sought over $\bk$ and $\bX$ with fixed $\bc$, there are no
contributions $\propto \log\abs{D}$.  And in the second part of the
algorithm, under the dynamics of type \eqref{e:DMDdynamics}, where
$ \hat{F}$ is used in place of $F$, the $\log\abs{D}$ terms cancel
one another.

The $\log\abs{D}$ terms also cancel under the more
sophisticated dynamics used in
\cite{Dontsova:2014et,Li:2011gn,Sarkar:2012ce}. There, the dynamics
are given by the master equation
\begin{equation}
  \label{e:dynamics2}
  \dot c_i = \sum_{j\in N(i)} \nu e^{-\beta Q_{\rm m}}\set{c_j (1-c_i)
    e^{-\beta (f_i - f_j)} - c_i (1-c_j)e^{-\beta (f_j - f_i)}},
\end{equation}
with
\begin{equation}
  \label{e:fdef}
  f_i \equiv \frac{\partial \hat{F}}{\partial c_i}-\beta^{-1}\log\paren{\frac{c_i}{1-c_i}}.
\end{equation}
Because of the differentiation with respect to $c_i$ in the previous
expression, the $\log|D|$ term does not appear in the differences,
$f_i - f_j$.

A more substantive difference between the existing DMD literature and our
analysis is the notion of the ensemble averaged site positions.  As
derived in the Appendix \eqref{e:mean_pos},
\begin{equation}
  \label{e:mean_pos2}
  \begin{split}
    \E^{\hat\nu}[\bx_i] &= \frac{\int_{D} \bx_i d\bx_i+
      e^{\beta\hat\mu_i} \int_D \bx_i\exp\set{-\tfrac{\beta
          k_i}{2}\abs{\bx_i - \bX_i}^2}d \bx_i}{|D| +
      e^{\beta\hat\mu_i} \int_D \exp\set{-\tfrac{\beta
          k_i}{2}\abs{\bx_i - \bX_i}^2}d \bx_i}\\
    & = (1-c_i) \frac{\int_{D} \bx_i d\bx_i}{|D|} + c_i \frac{\int_D
      {\bx_i}\exp\set{-\tfrac{\beta k_i}{2}\abs{\bx_i - \bX_i}^2}d
      \bx_i}{Z_i},
  \end{split}
\end{equation}
where we have used \eqref{e:mu_approx_vals} to simplify the
expression.  For simulations in computational domains that are
symmetric about the origin, such as $D=(-L,L)^d$ or a hypersphere, the first term in
this expression vanishes and $\E^{\hat\nu}[\bx_i] \to c_i \bX_i$.

An alternative notion of mean position could be useful in this case.
Consider:
\begin{equation}
  \label{e:mean_alt}
  \frac{\E^{\hat\nu}[a_i\bx_i]}{\E^{\hat\nu}[a_i]} = \frac{1}{Z_i}{\int_D {\bx_i}\exp\set{-\tfrac{\beta
        k_i}{2}\abs{\bx_i - \bX_i}^2}d \bx_i},
\end{equation}
where we have made use of \eqref{e:mean_weight_pos}.  Now, as $D$
tends to $\R^d$, \eqref{e:mean_alt} recovers $\bX_i$.  This weighted
averaging is inspired by the mathematical theory of multiphase flow
(see, for instance, \cite{drew1999tmf}).

Working with $\R^d$ is inherently problematic, as the measures defined
as in \eqref{e:nu} and \eqref{e:nu_approx} do not lead to well defined
probability measures, even under the mean field approximation.
Consider, for example, $\hat\nu$ with $N=1$.  For this problem the
partition function would be
\begin{equation}
  \sum_{a_1=0}^1 \int_{\R^d }\exp\set{-\beta\frac{a_1k_1}{2}|\bx_i -
    \bX_i|^2+\beta\hat \mu_1 a_1}d\bx_1 = \infty,
\end{equation}
since, in the case $a_1=0$, we are integrating $d\bx_1$ over the whole
space.

However, we contend that while \eqref{e:free_2011} may give rise to
physically consistent simulations, it is {\bf not} based on a
variational principle, and instead, practitioners should use
\eqref{e:nu_approx}.  In order to make use of $\Dkl$ and arrive at a
variational formulation, it is essential that the distributions be
well defined probability distributions, with finite partition
functions and that absolute continuity holds.  We note that this is
{not} an artifact of our mathematical analysis based on relative
entropy.  As the Gibbs-Bogliubov inequality is a restatement of the
non-negativity of relative entropy, it has the same underlying
assumptions of absolute continuity and well defined measures.

One way to correct \eqref{e:mean_pos2} is to alter the choice of
\eqref{e:Vdmdapprox}.  Suppose we mimic what is done in the binary
mixture (which does not suffer from these problems), and took
\begin{equation}
  \label{e:Vdmdapprox2}
  \Vhdmd = \sum_{i} \frac{k_i}{2}\abs{\bx_i - \bX_i}^2.
\end{equation}
We would replicate \eqref{e:Zh_AB} for the partition function and,
with this revised value, the free energy would be
\begin{equation}
  \hat{F}_{\dmd}= \E^{\hat\nu}[\Delta V] + \beta^{-1} \sum_{i=1}^N c_i \log c_i
  + (1-c_i)\log(1-c_i) - \log Z_i .
\end{equation}
Now, as $D\to \R^d$, we obtain
\begin{equation}
  \begin{split}
    \label{e:Fdmd_approx2}
    \hat{F}_{\dmd}\approx \E^{\hat\nu}[\Vdmd] + \beta^{-1}
    &\sum_{i=1}^N c_i \log c_i
    + (1-c_i)\log(1-c_i)  \\
    &\quad+ \frac{d}{2}\paren{\log\frac{\beta k_i}{2\pi}-1}.
  \end{split}
\end{equation}
This formulation has the advantage that
$\E^{\hat \nu}[\bx_i]\to \bX_i$ as $D\to \R^d$.  However, notice now
that there is no $c_i$ multiplying the last expression in
\eqref{e:Fdmd_approx2}, as in \eqref{e:free_2011}.

\subsubsection{Binary Mixture Problem}
As noted, if we make a mean field approximation, then as $D\to \R^d$,
we recover the expression found in \cite{Dontsova:2014et}, which we do
not reproduce here.

\subsection{Advantages of Relative Entropy}

One of the main advantages of the formulation given here is that the
question of relative entropy minimization is a rigorously defined
variational problem, forcing us to confront problems such as that
associated with domain size.  Indeed, for DMD we have
\begin{thm}
  Given $K\in (0, \infty)$ and the open bounded subset $D$ of $\R^d$,
  let
  \begin{equation}
    \label{e:admissible}
    \mathcal{A}_{D,K} = \set{\text{$\gamma$ of the form \eqref{e:nu_approx}}\mid
      \bX_i \in \bar D, \quad k_i \in [0, K]}.
  \end{equation}
  Then if $\set{\gamma_n}$ is a minimizing sequence of
  $\Dkl(\cdot||\nu)$, it has a subsequential limit such that
  \begin{equation}
    \label{e:minimizing_seq}
    \lim_{n\to \infty}\Dkl(\gamma_n||\nu) = \Dkl(\gamma_\star||\nu) =
    \inf_{\gamma\in \mathcal{A}} \Dkl(\gamma||\nu)
  \end{equation}
  and
  \begin{equation}
    \label{e:tv_conv}
    \gamma_n \overset{\mathrm{TV}}{\to} \gamma_\star.
  \end{equation}
\end{thm}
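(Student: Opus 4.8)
The plan is to replace the infinite-dimensional minimization over measures by a finite-dimensional one over a compact box and then argue by continuity. Every $\gamma \in \mathcal{A}_{D,K}$ has the form \eqref{e:nu_approx}, and once $(\bX,\bk) \in \bar D^N \times [0,K]^N$ is chosen the multipliers $\hat\bmu$ are pinned down by \eqref{e:mu_approx_vals} (equivalently, by the constraint $\E^{\gamma}[a_i]=c_i$); thus $\mathcal{A}_{D,K}$ is precisely the image of the map $(\bX,\bk) \mapsto \hat\nu(\cdot\,;\bk,\bX)$ defined on the compact set $\bar D^N \times [0,K]^N$. A minimizing sequence $\{\gamma_n\}$ therefore arises from a sequence $\{(\bX^{(n)},\bk^{(n)})\}$ in that box, and I would pass to a subsequence (not relabeled) along which $(\bX^{(n)},\bk^{(n)}) \to (\bX^\star,\bk^\star)$, and set $\gamma_\star \equiv \hat\nu(\cdot\,;\bk^\star,\bX^\star) \in \mathcal{A}_{D,K}$. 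Everything then reduces to two continuity claims: $\gamma_n \overset{\mathrm{TV}}{\to} \gamma_\star$ and $\Dkl(\gamma_n||\nu) \to \Dkl(\gamma_\star||\nu)$. One should note that the limit point may lie on a face $k_i = 0$, where the Gaussian factor degenerates to the constant $1$ and $Z_i = \abs{D}$; because $D$ is bounded this is harmless and $\gamma_\star$ is still a genuine probability measure of the admissible form \eqref{e:nu_approx}.

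Next I would record the uniform bounds forced by $\bX_i \in \bar D$, $k_i \in [0,K]$, and $\abs{D} < \infty$. With $R \equiv \mathrm{diam}(\bar D)$ one has $e^{-\beta K R^2/2}\abs{D} \le Z_i \le \abs{D}$, so each $Z_i$ is bounded away from $0$ and $\infty$ and is continuous in $(\bX_i,k_i)$ (continuity of the integral in \eqref{e:partition_approx} by dominated convergence); hence, by \eqref{e:mu_approx_vals}, the $\hat\mu_i$ are uniformly bounded and continuous in $(\bX_i,k_i)$. Also $0 \le \Vhdmd(\bx,\ba;\bk,\bX) \le \tfrac12 N K R^2$ on $(\set{0,1}\times D)^N$, while by \eqref{e:partition_approx2} the normalizer $\hat Z = \abs{D}^N \Pi_{i=1}^N (1-c_i)^{-1}$ does not depend on $(\bX,\bk)$ at all. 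Consequently the densities
\[
  p_n(\bx,\ba) \equiv \hat Z^{-1}\exp\set{-\beta\paren{\Vhdmd(\bx,\ba;\bk^{(n)},\bX^{(n)}) - \hat\bmu^{(n)}\cdot\ba}}
\]
are uniformly bounded and, since $\Vhdmd(\cdot\,;\bk^{(n)},\bX^{(n)}) \to \Vhdmd(\cdot\,;\bk^\star,\bX^\star)$ uniformly in $(\bx,\ba)$ and $\hat\bmu^{(n)} \to \hat\bmu^\star$, they converge uniformly on the finite-measure space $(\set{0,1}\times D)^N$ to the density $p_\star$ of $\gamma_\star$. Uniform convergence over a set of finite measure gives $\norm{p_n - p_\star}_{L^1} \to 0$, which is exactly $\gamma_n \overset{\mathrm{TV}}{\to} \gamma_\star$, establishing \eqref{e:tv_conv}.

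For the divergences I would invoke Proposition \ref{p:ac1}, which, under Assumptions \ref{a:Zmu} and \ref{a:Vbound}, provides $\hat\nu \ll \nu$ and the closed form \eqref{e:Dkl2}; written out, and using $\E^{\gamma_n}[\ba]=\bc$,
\[
  \Dkl(\gamma_n||\nu) = \beta\,\E^{\gamma_n}[\Vdmd] - \beta\,\E^{\gamma_n}[\Vhdmd(\cdot\,;\bk^{(n)},\bX^{(n)})] + \beta(\hat\bmu^{(n)} - \bmu)\cdot\bc + \log Z - \log\hat Z .
\]
Here $\bmu$, $\log Z$, and $\log\hat Z$ are fixed and $\hat\bmu^{(n)} \to \hat\bmu^\star$ by the previous step. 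For the first expectation, $\abs{\E^{\gamma_n}[\Vdmd] - \E^{\gamma_\star}[\Vdmd]} \le C\,\norm{p_n - p_\star}_{L^1} \to 0$, with $C$ the constant of Assumption \ref{a:Vbound} --- note this uses only boundedness, not continuity, of $\Vdmd$. For the second, I would split $\E^{\gamma_n}[\Vhdmd(\cdot\,;\bk^{(n)},\bX^{(n)})]$ as $\E^{\gamma_n}[\Vhdmd(\cdot\,;\bk^{(n)},\bX^{(n)}) - \Vhdmd(\cdot\,;\bk^\star,\bX^\star)] + \E^{\gamma_n}[\Vhdmd(\cdot\,;\bk^\star,\bX^\star)]$; the first piece is dominated by the uniform norm of the difference of the integrands, the second converges to $\E^{\gamma_\star}[\Vhdmd(\cdot\,;\bk^\star,\bX^\star)]$ via $\norm{p_n - p_\star}_{L^1}$ and the uniform bound on $\Vhdmd$. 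Hence $\Dkl(\gamma_n||\nu) \to \Dkl(\gamma_\star||\nu)$. Finally, since $\{\gamma_n\}$ is a minimizing sequence, $\lim_n \Dkl(\gamma_n||\nu) = \inf_{\gamma\in\mathcal{A}_{D,K}}\Dkl(\gamma||\nu)$, and as $\gamma_\star \in \mathcal{A}_{D,K}$ this value is attained at $\gamma_\star$, which is \eqref{e:minimizing_seq}.

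I expect the argument to be essentially just this compactness-and-continuity bookkeeping, with no deep analytic content. The only points that will need care are the degenerate face $k_i = 0$, where the approximate Gaussian becomes the uniform density on $D$, and the fact that $\Vdmd$ inherits merely boundedness (not continuity) from $\phi$; both are sidestepped by phrasing every limit through $L^1$/uniform convergence of densities paired with bounded integrands, rather than through weak convergence against continuous test functions. That is the main thing to get right, and once it is in place the remaining estimates are routine.
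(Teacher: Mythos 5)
Your proof is correct, but it takes a genuinely different route from the paper's for the second half of the argument. The first step is shared: both you and the paper reduce to compactness of the parameter box $\bar D^N\times[0,K]^N$, extract a convergent subsequence of $(\bX^{(n)},\bk^{(n)})$, and define $\gamma_\star$ as the measure built from the limiting parameters (this is essentially the content of the paper's Appendix B lemma, and your uniform bounds on $Z_i$, $\hat\bmu$, and the densities are the same estimates that appear there). Where you diverge is in how the conclusions are drawn: the paper passes only to \emph{weak} convergence $\gamma_{n_k}\overset{\mathrm{w}}{\to}\gamma_\star$ and then imports two abstract results from Pinski et al.\ --- lower semicontinuity of $\Dkl(\cdot||\nu)$ under weak convergence to get \eqref{e:minimizing_seq}, and their Lemma~2.4 (weak convergence plus convergence of the KL values implies TV convergence) to get \eqref{e:tv_conv}. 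You instead exploit the explicit structure of the problem: the reference space $(\set{0,1}\times D)^N$ has finite measure, $\hat Z$ is parameter-independent by \eqref{e:partition_approx2}, so the densities converge uniformly and hence in $L^1$, which gives TV convergence directly; and the closed form \eqref{e:Dkl2} together with the boundedness of $\Vdmd$ (Assumption~\ref{a:Vbound}) gives full \emph{continuity} of $(\bX,\bk)\mapsto\Dkl(\hat\nu(\cdot;\bk,\bX)||\nu)$, not merely lower semicontinuity. Your version is self-contained and actually proves something slightly stronger (a continuous function on a compact set attains its infimum, so existence of a minimizer follows by Weierstrass without reference to minimizing sequences); the paper's version is more robust in that it would survive parametrizations where no closed form for $\Dkl$ is available, which is relevant to the generalizations discussed at the end of Section~\ref{s:discussion}. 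Your handling of the two delicate points --- the degenerate face $k_i=0$ and the fact that $\Vdmd$ is only assumed bounded, not continuous --- is sound, since every limit you take is phrased through $L^1$ convergence of densities against bounded integrands.
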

Note that by our previous assumptions on $V$ and $\bmu$, that they are
bounded on $D$ for $c_i\in (0,1)$, we are assured that the elements of
$\mathcal{A}_{D,K} $ are absolutely continuous with respect to such a
$\nu$, and have finite $D_{kl}$.
\begin{proof}
  We claim $\mathcal{A}_{D,K}$ is weakly compact; see Appendix
  \ref{a:weak}.  Therefore, any sequence has a weak limit,
  $\gamma_{n_k}\overset{\mathrm{w}}{\to} \gamma_\star$.  Then, by the
  lower semicontinuity of $\Dkl(\cdot||\nu)$ (Proposition 2.1 of
  \cite{Pinski:2013vq}) we have \eqref{e:minimizing_seq}, then we can
  infer \eqref{e:tv_conv} (Lemma 2.4 of \cite{Pinski:2013vq}).
\end{proof}
Thus, the free energy minimization part of the algorithm is well posed
in the sense that if we take a minimizing sequence it has a
subsequential limit.

Another advantage of this formulation of the problem is that it allows
one to consider more general parameterizations of the synthetic
distribution, $\hat \nu$.  Indeed, one could imagine any distribution
parameterized by some collection of variables, denoted collectively by
$\mathbf{p}$, and proceed as above; first, one finds a local minimizer
of $\Dkl(\hat\nu_{\bf p} || \nu)$.  Using that, the free energy
gradients with respect to $\bc$ are computed at this stationary point,
and this drives the dynamic evolution of the $c_i$'s.  Provided this
admissible class is closed (in some sense), we are ensured that its
minimization problem is well posed too.

\subsection{Open Problems and Future Work}

Here, we have presented a mathematical framework for DMD, and
we have given particular attention to the question of free energy
minimization.  One point we have not addressed is the temporal
evolution problem, and how the master equation will be influenced by
the use of the approximate free energy in place of the true DMD free
energy. Indeed, we note that while the minimization of free energy
ensures that $\hat{\nu}$ is close to $\nu$ in the sense of relative
entropy (and hence, total variation), we cannot conclude that
\begin{equation}
  \abs{\frac{\partial F}{\partial c_i}-\frac{\partial   \hat{F}}{\partial c_i}},
\end{equation}
errors in the dynamics of both \eqref{e:DMDdynamics} and
\eqref{e:dynamics2} are small.  As of now, this remains an
unconstrained approximation that merits investigation.  It can be
shown that
\begin{equation}
  \label{e:dFdc}
  \frac{\partial F}{\partial c_i} = \mu_i,
\end{equation}
and, at a minimizer of $\Dkl$,
\begin{equation}
  \label{e:dFhdc}
  \frac{\partial   \hat{F}}{\partial c_i} = \hat\mu_i +
  \beta \Cov_{\hat\nu}\paren{\Delta V, \partial_{c_i} \hat\bmu \cdot \ba}.
\end{equation}
Calculations of these can be found in the appendix.  Similar
expressions hold under the mean field approximation.

The validity of the mean field approximation used in
\cite{Dontsova:2014et,Li:2011gn,Sarkar:2012ce} remains to be explored.
As noted, this dramatically simplifies the calculations and
immediately gives the ``true'' chemical potentials
\eqref{e:mu_mf_vals}.  The solvability in the general case is also an
open problem.

Finally, there is the question of how, or in what sense, does DMD, as
a model, approximate any specific, more primitive MD model, such as
Langevin dynamics.  More specifically, since DMD removes the
stochastic nature of primitive MD models, replacing the displacive
dynamics with a finite-temperature, energy minimization, DMD is
deterministic.  Similarly, the stochastic or random walk-like
diffusive dynamics in primitive MD becomes deterministic in DMD. The
question of when this is and is not acceptable remains to be explored.

\appendix
\section{Detailed Calculations}
\label{s:details}

\subsection{Partition Functions}
To obtain the approximate partition function
\eqref{e:partition_approx}, we employ the following procedure:
\begin{equation}
  \label{e:partition_calc1}
  \begin{split}
    \hat Z &= \sum_{\ba} \int_{D^N}
    \exp\set{-\beta(\hat{V}(\bx,\ba;\bk,\bX) - \hat\bmu \cdot \ba )} d\bx\\
    & = \Pi_{i=1}^N \set{\sum_{a_i} \int_D\exp\set{-\tfrac{\beta a_i
          k_i}{2}\abs{\bx_i - \bX_i}^2 + \beta \hat\mu_i a_i}d\bx_i}\\
    &= \Pi_{i=1}^N \set{|D|+
      e^{\hat\mu_i}{\int_D\exp\set{-\tfrac{\beta
            k_i}{2}\abs{\bx_i - \bX_i}^2}d\bx_i}}\\
    & = \Pi_{i=1}^N \set{|D|+ e^{\hat\mu_i} Z_i}.
  \end{split}
\end{equation}

\subsection{Chemical Potentials}
To obtain \eqref{e:mu_approx_vals}, we apply \eqref{e:partition_calc1}
to obtain
\begin{equation}\label{chempot}
  \begin{split}
    \E^{\hat\nu}[a_i]& = \frac{\Pi_{j\neq i}^N \set{|D|+
        e^{\beta\hat\mu_j} Z_i}}{\hat Z}\set{\sum_{a_i} \int_D
      a_i\exp\set{-\tfrac{\beta a_i
          k_i}{2}\abs{\bx_i - \bX_i}^2 + \beta \hat\mu_i a_i}d \bx_i}\\
    &=\frac{e^{\beta\hat\mu_i} Z_i}{|D| + e^{\beta\hat\mu_i} Z_i }.
  \end{split}
\end{equation}
Since $\E^{\hat\nu}[a_i] = c_i$, we solve for $\hat{\mu}_i$ in terms
of $c_i$.

\subsection{Mean Position}

The expectation value of the atomic position $\bx_i$ is
\begin{equation}
  \label{e:mean_pos}
  \begin{split}
    \E^{\hat\nu}[\bx_i]& = \frac{\Pi_{j\neq i}^N \set{|D|+
        e^{\beta\hat\mu_j} Z_i}}{\hat Z}\set{\sum_{a_i} \int_D
      \bx_i\exp\set{-\tfrac{\beta a_i
          k_i}{2}\abs{\bx_i - \bX_i}^2 + \beta \hat\mu_i a_i}d \bx_i}\\
    &=\frac{1}{|D| + e^{\beta\hat\mu_i} Z_i }\paren{\int_D \bx_i
      d\bx_i+ e^{\beta\hat\mu_i} \int_D \bx_i\exp\set{-\tfrac{\beta
          k_i}{2}\abs{\bx_i - \bX_i}^2}d \bx_i}
  \end{split}
\end{equation}
and the weighted mean position is given by
\begin{equation}
  \label{e:mean_weight_pos}
  \begin{split}
    \E^{\hat\nu}[a_i\bx_i]& = \frac{\Pi_{j\neq i}^N \set{|D|+
        e^{\beta\hat\mu_j} Z_i}}{\hat Z}\set{\sum_{a_i} \int_D
      a_i\bx_i\exp\set{-\tfrac{\beta a_i
          k_i}{2}\abs{\bx_i - \bX_i}^2 + \beta \hat\mu_i a_i}d \bx_i}\\
    &=\frac{1}{|D| + e^{\beta\hat\mu_i} Z_i }\paren{
      e^{\beta\hat\mu_i} \int_D \bx_i\exp\set{-\tfrac{\beta
          k_i}{2}\abs{\bx_i - \bX_i}^2}d \bx_i}.
  \end{split}
\end{equation}

\subsection{Mean Potential}

The expectation value of the mean potential $\Vhdmd$ is obtained as
follows:
\begin{equation}
  \label{e:mean_potential1}
  \begin{split}
    \E^{\hat\nu}[\Vhdmd] &=\sum_{j=1}^N \E^{\hat
      \nu}\bracket{\frac{a_j
        k_j}{2}\abs{\bx_j - \bX_j}^2}\\
    &= \sum_{j=1}^N \frac{\Pi_{k\neq j}^N \set{|D|+ e^{\beta\hat\mu_k}
        Z_k}}{\hat Z} e^{\beta \hat\mu_j }\int_D
    \frac{k_j}{2}\abs{\bx_j - \bX_j}^2 \exp\set{-\tfrac{\beta
        k_j}{2}\abs{\bx_j - \bX_j}^2} d\bx_j\\
    & = \sum_{j=1}^N \frac{c_jk_j}{2Z_j}\int_D\abs{\bx_j - \bX_j}^2
    \exp\set{-\tfrac{\beta k_j}{2}\abs{\bx_j - \bX_j}^2} d\bx_j .
  \end{split}
\end{equation}

\subsection{Free Energy Gradients}
The DMD free energy gradient, with respect to $c_i$, is computed as
follows
\begin{equation*}
  \begin{split}
    \frac{\partial F}{\partial c_i}& = - \beta^{-1} \frac{1}{Z}
    \frac{\partial Z}{\partial c_i} + \frac{\partial \bmu}{\partial
      c_i}\cdot \bc +
    \mu_i\\
    & = - \beta^{-1} \frac{1}{Z} \set{\sum \int \beta \frac{\partial
        \bmu}{\partial c_i}\cdot \ba\exp\set{-\beta \Vdmd + \bmu \cdot
        \ba} d\bx}+ \frac{\partial \bmu}{\partial c_i}\cdot \bc +
    \mu_i\\
    & = - \frac{\partial \bmu}{\partial c_i}\cdot\E^\nu[\ba]+
    \frac{\partial \bmu}{\partial c_i}\cdot \bc + \mu_i = \mu_i.
  \end{split}
\end{equation*}
This is \eqref{e:dFdc}.  The gradient of $\hat{F}$ with respect to any
of the parameters defining $\hat\nu$ vanishes at the minimizer of
$\Dkl$.  To see this, recall that since we have a minimizer of $\Dkl$,
\[
\frac{\partial\Dkl(\hat\nu||\nu)}{\partial p} = 0
\]
for any parameter, $p$, such as $k_i$ and $\bX_i$.  Since we can write
\begin{equation*}
  \beta^{-1} \Dkl = \hat{F}-F
\end{equation*}
and $F$ does not depend on the parameters, when a derivative is
taken with respect to $c_i$, chain rule terms involving the parameters
vanish at the minimizers.  Therefore,
\begin{equation*}
  \begin{split}
    \frac{\partial \hat{F}}{\partial c_i}& = - \beta^{-1} \frac{1}{\hat
      Z} \frac{\partial \hat Z}{\partial c_i} + \frac{\partial
      \hat\bmu}{\partial c_i}\cdot \bc +
    \hat\mu_i + \frac{\partial \E^{\hat\nu}[\Delta V]}{\partial c_i}\\
    & = \hat\mu_i + \frac{1}{\hat{Z}}\sum_{a_i}\int \Delta V
    \beta( \partial_{c_i} \hat\bmu \cdot \ba) \exp\set{-\beta \hat V
      +\beta
      \hat\bmu\cdot \ba}\\
    &\quad- \frac{1}{Z}\E^{\hat\nu}[\Delta V]\paren{\sum_{a_i}\int
      \beta( \partial_{c_i} \hat\bmu \cdot \ba) \exp\set{-\beta \hat V
        +\beta \hat\bmu\cdot \ba}},
  \end{split}
\end{equation*}
and this gives us \eqref{e:dFhdc}.

\section{Weak Compactness of the set of Measures}
\label{a:weak}
\begin{lem}
  The set $\mathcal{A}_{D,K}$ is weakly compact, in the sense that if
  $\set{\gamma_n}$ is any sequence in $\mathcal{A}_{D,K}$, it has a
  weakly converging subsequence in $\mathcal{A}_{D,K}$.
\end{lem}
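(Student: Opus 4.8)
\medskip
\noindent\textbf{Proof proposal.}\quad The plan is to exploit the fact that $\mathcal{A}_{D,K}$ is, after identifying a measure with its defining parameters, the image of the \emph{compact} finite-dimensional set $P := \bar D^N \times [0,K]^N \subset \R^{dN}\times\R^N$ under a map that is continuous into the total-variation topology. Concretely, a pair $(\bX,\bk)\in P$ determines an element of $\mathcal{A}_{D,K}$ through \eqref{e:nu_approx}: the fluctuation integrals
\begin{equation*}
  Z_i = \int_D \exp\set{-\tfrac{\beta k_i}{2}\abs{\bx_i-\bX_i}^2}\,d\bx_i
\end{equation*}
fix the constraint-enforcing chemical potentials $\hat\mu_i$ via \eqref{e:mu_approx_vals}, after which $\hat Z = |D|^N\Pi_i(1-c_i)^{-1}$ by \eqref{e:partition_approx2}, so that $\gamma = \gamma_{\bX,\bk}$ has density
\begin{equation*}
  p_{\bX,\bk}(\bx,\ba) = \hat Z^{-1}\exp\set{-\beta\sum_i \tfrac{a_i k_i}{2}\abs{\bx_i-\bX_i}^2 + \beta\sum_i \hat\mu_i a_i}
\end{equation*}
against the fixed \emph{finite} reference measure $\lambda = (\text{counting on }\set{0,1}^N)\otimes(\text{Lebesgue on }D^N)$.

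The steps would then be: (i) establish the uniform two-sided bounds $|D|\,e^{-\beta K R^2/2}\le Z_i\le |D|$ over $P$, with $R=\mathrm{diam}\,D$ --- this is exactly where boundedness of $D$ enters, and the endpoint $k_i=0$, where $Z_i=|D|$, is harmless; (ii) deduce from (i), from the continuity of $(\bX_i,k_i)\mapsto Z_i$ (dominated convergence, the integrand being bounded by $1$ on the bounded set $D$), and from continuity of $\log$ on $(0,\infty)$ that $(\bX,\bk)\mapsto\hat\mu_i$ is continuous and bounded on $P$, hence the densities $p_{\bX,\bk}$ are continuous in $(\bX,\bk)$ pointwise in $(\bx,\ba)$ and uniformly bounded, $p_{\bX,\bk}\le \hat Z^{-1}e^{\beta N M}$ with $M=\sup_{P}\max_i\abs{\hat\mu_i}<\infty$; (iii) given a sequence $\set{\gamma_n}=\set{\gamma_{\bX^{(n)},\bk^{(n)}}}\subset\mathcal{A}_{D,K}$, extract by compactness of $P$ a subsequence along which $(\bX^{(n)},\bk^{(n)})\to(\bX^\star,\bk^\star)\in P$, set $\gamma_\star:=\gamma_{\bX^\star,\bk^\star}\in\mathcal{A}_{D,K}$, and apply dominated convergence on the finite measure $\lambda$ to get $\int\abs{p_{\bX^{(n)},\bk^{(n)}}-p_{\bX^\star,\bk^\star}}\,d\lambda\to 0$, i.e.\ $\gamma_n\to\gamma_\star$ in total variation and a fortiori weakly.

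The main --- really, the only --- obstacle is the bookkeeping in step (ii): verifying that the chemical potentials $\hat\mu_i$, and with them the densities, depend continuously on the parameters and are uniformly controlled over $P$. This reduces to the uniform two-sided bound on $Z_i$ from step (i), which rests precisely on $D$ being bounded (keeping $Z_i\le|D|<\infty$) and on nothing degenerating at $k_i=0$ (where $Z_i=|D|>0$); everything else is routine. As a cross-check on the existence of a subsequential weak limit, one can also observe that every $\gamma\in\mathcal{A}_{D,K}$ is supported in the compact set $(\set{0,1}\times\bar D)^N$, so that $\mathcal{A}_{D,K}$ is tight and hence relatively weakly compact by Prokhorov's theorem; the parameter-convergence argument above is then what is needed to identify the limit and confirm it lies in $\mathcal{A}_{D,K}$.
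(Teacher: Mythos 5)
Your proposal is correct and follows essentially the same route as the paper's proof: extract a convergent subsequence of the parameters $(\bX^{(n)},\bk^{(n)})$ in the compact set $\bar D^N\times[0,K]^N$, define $\gamma_\star$ from the limiting parameters, and use continuity of the parameter-to-density map (resting on the two-sided bounds for $Z_i$ that boundedness of $D$ provides). The only difference is cosmetic: you close with dominated convergence of the densities against a fixed finite reference measure, yielding total-variation convergence, whereas the paper closes with an explicit Lipschitz-type estimate on the difference of the exponential weights; both deliver the required weak convergence.
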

\begin{proof}
  Given any sequence of $\gamma_n \in \mathcal{A}_{D,K}$, we have
  sequences $(\bX^{(n)}, \bk^{(n)}) \in \bar D^N\times [0,K]^N$.
  Since the set is compact, it has a convergent subsequence,
  \[
  \bX_{i}^{(n_m)} \to \bX_{i}^{(\star)},\quad k_{i}^{(n_m)} \to
  k_{i}^{(\star)}
  \]
  Let $\gamma_\star$ be the measure associated with $\bX^{(\star)}$
  and $\bk^{(\star)}$.  Clearly, $\gamma_\star \in \mathcal{A}_{D,K}$.

  Given any bounded continuous function $f$ on the set $X$, we will
  now show
  \[
  \E^{\gamma_{n_m}}[f] \to \E^{\gamma_\star}[f].
  \]
  From \eqref{e:partition_approx2},  so long as the
  $c_i \in (0,1)$ $Z_n = Z_\star$.  For brevity, let
  \begin{gather*}
    \Vhdmd(\bx, \ba;\bX^{(n_m)}, \bk^{(n_m)}) = \hat{V}^{(m)}(\bx,
    \ba)\to \hat{V}^{(\star)}(\bx,
    \ba)\\
    \hat\bmu(\bX^{(n_m)}, \bk^{(n_m)}) = \hat{\bmu}^{(m)}\to
    \hat{\bmu}^{(\star)}
  \end{gather*}

  Therefore, when we take differences,
  \begin{equation*}
    \begin{split}
      &\abs{\E^{\gamma_{n_m}}[f]- \E^{\gamma_\star}[f]
      }\\
      &\leq\frac{1}{Z_\star}\sum_{\ba} \int_{D^N} \abs{f(\bx, \ba)}
      \abs{e^{-\beta\hat{V}^{(m)}(\bx,\ba) + \beta \hat\bmu^{(m)}
          \cdot \ba }-e^{-\beta\hat{V}^{( \star)}(\bx,\ba) + \beta
          \hat\bmu^{(\star)}
          \cdot \ba   }}\\
      & \leq \frac{1}{Z_\star}\sum_{\ba} \int_{D^N} \abs{f(\bx,
        \ba)}e^{-\beta\hat{V}^{( \star)}(\bx,\ba) + \beta
        \hat\bmu^{(\star)} \cdot \ba
      }\abs{e^{-\beta(\hat{V}^{(m)}(\bx,\ba)-\hat{V}^{(
            \star)}(\bx,\ba)) + \beta \hat(\bmu^{(m)}-\bmu^{(\star)})
          \cdot \ba }-1}
    \end{split}
  \end{equation*}
  Since $\bx$ and $\ba$ are elements of bounded sets, and
  \[
  e^{-\beta(\hat{V}^{(m)}(\bx,\ba)-\hat{V}^{( \star)}(\bx,\ba)) +
    \beta \hat(\bmu^{(m)}-\bmu^{(\star)}) \cdot \ba }
  \]
  depends continuously upon $\bX^{(m)}$ and $\bk^{(m)}$, we then have
  there exists some constant such that for all $\bx\in \bar D$ and
  $\ba \in \set{0,1}$,
  \[
  \abs{e^{-\beta(\hat{V}^{(m)}(\bx,\ba)-\hat{V}^{( \star)}(\bx,\ba)) +
      \beta \hat(\bmu^{(m)}-\bmu^{(\star)}) \cdot \ba }-1}\leq
  C\paren{\abs{\bX^{(m)} - \bX^{(\star)}} + \abs{\bk^{(m)} -
      \bk^{(\star)}}}
  \]
  Therefore,
  \[
  \abs{\E^{\gamma_{n_m}}[f]- \E^{\gamma_\star}[f] }\leq
  C\paren{\abs{\bX^{(m)} - \bX^{(\star)}} + \abs{\bk^{(m)} -
      \bk^{(\star)}}}\E^{\gamma_\star}[|f|]
  \]
  and we have weak convergence.

\end{proof}

\bibliographystyle{plain}

\bibliography{dmd_refs}

\end{document}